\newcommand{\bt}{\begin{thr}{\bf Theorem. }} 
\newcommand{\satz}{\begin{thr}{\bf Theorem. }\rm} 
\newcommand\E{\mathbb{E}}
\newcommand\R{\mathbb{R}}
\newcommand\p{\mathbb{P}}
\newcommand\N{\mathbb{N}}
\newcommand\D{\mathcal{D}}
\newcommand\F{\mathcal{F}}
\newcommand\C{\mathcal{C}}
\newcommand\A{\mathcal{A}}
\newcommand\B{\mathcal{B}}
\newcommand\ofp{\Omega,\mathcal{F},\mathbb{P}}
\renewcommand\i{\infty}
\newcommand\la{\lambda}
\newcommand\ve{\varepsilon}
\newcommand{\vp}{\varphi}
\DeclareMathOperator{\Var}{Var}
\newtheorem{theorem}{Theorem}[section]  
\newtheorem{definition}[theorem]{Definition}
\newtheorem{lemma}[theorem]{Lemma}
\newtheorem{proposition}[theorem]{Proposition}
\newtheorem{meta-theorem}[equation]{Meta-theorem}
\newtheorem*{acka}{Acknowledgement}
\newtheorem{remark}[theorem]{Remark}
\title{The super-replication theorem\\
under proportional transaction costs revisited}
\author{Walter Schachermayer\footnote{Fakult\"at f\"ur Mathematik, Universit\"at Wien, Oskar-Morgenstern-Platz 1, A-1090 Wien, {\tt walter.schachermayer@univie.ac.at}. Partially supported by the Austrian Science Fund (FWF) under grant P25815 and Doktoratskolleg W1245, and the European Research Council (ERC) under grant FA506041.}\\
\vspace{3mm}\\
{\it dedicated to Ivar Ekeland on the occasion of his seventieth birthday}}
\begin{document}

\date{\today}
\maketitle



\begin{abstract}
We consider a financial market with one riskless and one risky asset. The super-replication theorem states that there is no duality gap in the problem of super-replicating a contingent claim under transaction costs and the associated dual problem. We give two versions of this theorem.

The first theorem relates a numéraire-based admissibility condition in the primal problem to the notion of a local martingale in the dual problem. The second theorem relates a numéraire-free admissibility condition in the primal problem to the notion of a uniformly integrable martingale in the dual problem.
\end{abstract}

\section{Introduction}
The essence of the Black-Scholes theory (\cite{BS73}, \cite{M73}) goes as follows: in the framework of their model $S=(S_t)_{0 \leq t \leq T}$ of a financial market (with riskless interest rate $r$ normalized to $r=0$) the unique arbitrage-free price for a contingent claim $X_T$ maturing at time $T$ is given by
\begin{equation}\label{I2}
X_0= \mathbb{E}_{Q} [X_T]. 
\end{equation}

Here $Q$ is the ``martingale measure" for the Black-Scholes model, i.e.~the probability measure on $(\Omega, \mathcal{F}_T, \mathbb{P})$ under which $S$ is a martingale.
The paper of Harrison-Kreps \cite{HK79} marked the beginning of a deeper understanding of the notion of arbitrage and its relation to martingale theory. Today it is very well understood that the salient feature of the Black-Scholes model which causes \eqref{I2} to yield the unique arbitrage-free price is the fact that the martingale measure $Q$ is {\it unique} in this model.

Financial markets $S$ admitting a unique martingale measure $Q$ are called "complete financial markets". We remark in passing that in this informal introduction we leave technicalities aside, such as integrability assumptions or the requirement that this measure $Q$ should be {\it equivalent} to the original measure $\mathbb{P}$, i.e.~$Q [A]=0$ if and only if $\mathbb{P}[A]=0.$

In a complete market $S=(S_t)_{0 \leq t \leq T}$ every contingent claim $X_T$ can be {\it perfectly replicated}, i.e.~there is a predictable process $H=(H_t)_{0 \leq t \leq T}$ such that 
\begin{equation}\label{I4a}
X_T = X_0 + \int^T_0 H_t dS_t.
\end{equation}
We now pass to the more realistic setting of a possibly {\it incomplete} financial market $S=(S_t)_{0 \leq t \leq T}.$ By definition we assume that the set $\mathcal{M}^{e}(S)$ of equivalent martingale measures is non-empty, but (possibly) not reduced to a singleton. In this setting the valuation formula \eqref{I2} is replaced by
\begin{equation}\label{I4}
X_0=\sup_{Q \in \mathcal{M}^{e}(S)} \mathbb{E}_Q[X_T]
\end{equation}

This real number $X_0$ is called the {\it super-replication price} of $X_T$. The reason for this name is that one may find a predictable strategy $H=(H_t)_{0 \leq t \leq T}$ such that the equality \eqref{I4a} now is replaced by the inequality
\begin{equation}\label{I5}
X_T \leq X_0 + \int_0^T H_t dS_t
\end{equation}
and $X_0$ is the smallest number with this property. This is the message of the {\it super-replication theorem} which was established by N.~El Karoui and M.-C.~Quenez \cite{EQ95} in a Brownian framework and, in greater generality, by F.~Delbaen and the author in \cite{DS94} (compare \cite{DS06} for a comprehensive account).
\vskip10pt
The theme of the present paper is to show (two versions of) a super-replication theorem in the presence of transaction costs $\lambda >0.$ For a given financial market $S=(S_t)_{0 \leq t \leq T}$ as above we now suppose that we can {\it buy} the stock at price $S$ but can only {\it sell} it at price $(1-\lambda)S.$ The higher price $S$ is called the {\it ask price} while the lower price $(1-\lambda)S$ is called the {\it bid price}.

In this context the notion of {\it martingale measures} $Q$ appearing in \eqref{I4} is replaced by the following concept which goes back to the pioneering work of E.~Jouini and H.~Kallal \cite{JK95}.

\begin{definition}\label{1.1}
Fix a price process $S=(S_t)_{0 \leq t \leq T}$ and transaction costs $0 < \lambda <1$ as above. A {\it consistent price system} (resp.~a consistent  local price system) is a pair $(\widetilde{S}, Q)$ such that $Q$ is a probability measure equivalent to $\mathbb{P}$ and $\widetilde{S}=(\widetilde{S}_t)_{0 \leq t \leq T}$ takes its values in the bid-ask spread $[(1-\lambda) S,S]=([(1-\lambda)S_t, S_t])_{0 \leq t \leq T}$  and $\widetilde{S}$ is a $Q$-martingale (resp.~a local $Q$-martingale). 

To stress the difference of the two notions we shall sometimes call a consistent price system a consistent price system in the non-local sense.
\end{definition}

The condition of the {\it existence of an equivalent martingale measure} in the frictionless setting corresponds to the following notion.

\begin{definition}\label{1.2}
For $0 < \lambda < 1,$ we say that a price process $S=(S_t)_{0 \leq t \leq T}$ satisfies $(CPS^\lambda)$ (resp.~$(CPS^\lambda)$ in a local sense) if there exists a consistent price system (resp.~a consistent local price system).
\end{definition}

It is the purpose of this article to identify the precise assumptions in order to establish an analogue to \eqref{I4} and \eqref{I5} above, after translating these statements into the context of financial markets under transaction costs. To make concrete what we have in mind, we formulate our program in terms of a not yet precisely formulated ``meta-theorem".

\begin{theorem}\label{t1.3}
(not yet precise version of super-hedging) Fix a financial market $S=(S_t)_{0 \leq t \leq T}$, transaction costs $0 < \lambda < 1$, and a contingent claim which pays $X_T$ many units of bond at time $T$. Assume that $S$ satisfies an appropriate regularity condition (of no arbitrage type). For a number $X_0 \in \mathbb{R}$, the following assertions are equivalent.
\begin{itemize}
\item[(i)] $X_T$ can be {\it super-replicated} by starting with an initial portfolio of $X_0$ many units of bond and subsequently trading in $S$ under transaction costs $\lambda$. The trading strategy has to be {\it admissible} in an appropriate sense.
\item[(ii)] For every consistent price system $(\widetilde{S}, Q)$ (in an appropriate sense, i.e.~local or global) we have
\begin{equation*}
X_0 \geq \mathbb{E}_Q[X_T]. 
\end{equation*}
\end{itemize}
\end{theorem}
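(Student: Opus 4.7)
The strategy is the classical Hahn--Banach/bipolar duality for super-hedging, now adapted to the bid-ask spread.

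The implication (i)$\Rightarrow$(ii) is the easier half. Given any admissible self-financing strategy $(\vp^0,\vp^1)$ with $(\vp_0^0,\vp_0^1)=(X_0,0)$ and $\vp_T^0\ge X_T$, and any consistent price system $(\widetilde S,Q)$, I would introduce the shadow-value process $V_t:=\vp_t^0+\vp_t^1\widetilde S_t$. Because every purchase is booked at the ask $S$ and every sale at the bid $(1-\la)S$ while $\widetilde S\in[(1-\la)S,S]$, re-evaluating each trade through $\widetilde S$ can only decrease $V$; combined with the (local) martingale property of $\widetilde S$ this makes $V$ a (local) $Q$-supermartingale, which the admissibility condition upgrades to a true supermartingale via Fatou. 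Hence $X_0=V_0\ge\E_Q[V_T]\ge\E_Q[\vp_T^0]\ge\E_Q[X_T]$.

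For (ii)$\Rightarrow$(i) the plan is Hahn--Banach. Let $\A\subseteq L^0(\p)$ be the convex cone of random variables of the form $\vp_T^0-Y$, where $Y\ge 0$ and $(\vp^0,\vp^1)$ is an admissible self-financing strategy with $(\vp_0^0,\vp_0^1)=(0,0)$; the super-replication price of $X_T$ then equals $\inf\{c\in\R:X_T-c\in\A\}$. The three steps are: (a) show that $\A$ is closed in $L^0(\p)$ under convergence in probability; (b) invoke a Kreps--Yan/bipolar argument to conclude that $X_T-X_0\in\A$ as soon as $\E_Q[X_T-X_0]\le 0$ for every element in the polar cone of $\A$; and (c) identify this polar with the set of consistent price systems by testing $\A$ on elementary round-trips \enquote{buy one share at stopping time $\tau$, sell at $\sigma\ge\tau$}, which forces the separating density process to lie in $[(1-\la)S,S]$ and to be a $Q$-(local)martingale.

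The main obstacle is step (a), the closedness of $\A$ in $L^0(\p)$. In the frictionless setting this is the Delbaen--Schachermayer closedness lemma; under transaction costs one must instead work with $\R^2$-valued finite-variation holdings and extract, via a Kom\'los-type argument, convex combinations $(\vp^{0,n},\vp^{1,n})$ that converge componentwise almost surely to a limiting pair of finite-variation processes, and then verify that self-financing and admissibility both survive the passage to the limit. It is precisely here that the two announced versions of the theorem bifurcate: the numeraire-based admissibility pairs in duality with the local-martingale notion of consistent price system, whereas a numeraire-free admissibility (a uniform integrability condition on the negative part of $V$) is required to obtain dual elements that are consistent price systems in the true-martingale sense. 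Once (a) is secured, the Yan theorem together with the Jouini--Kallal/Kabanov geometry of the solvency cones makes (b) and (c) essentially routine.
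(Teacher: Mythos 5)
Your proposal is correct and mirrors the paper's own strategy: the easy implication runs exactly through the shadow-value process $\varphi^0_t+\varphi^1_t\widetilde S_t$ being an optional strong $Q$-supermartingale, and the converse runs through a Kom\'los-type closedness/convex-compactness lemma for $\A^M\subseteq L^0(\R^2)$ (Lemma \ref{l4.9}, Theorems \ref{t4.14}--\ref{t4.14.1}), a bipolar theorem in $L^0(\R^2)$ (the Kabanov--Safarian multidimensional version rather than a direct Kreps--Yan invocation), and the identification of the polar cone with consistent price systems via elementary ask/bid round-trips (Proposition \ref{prop4.2}). The one organizational point your sketch does not make explicit is that the paper does not run the duality in parallel for the two admissibility notions: it proves the num\'eraire-free/true-martingale version (Theorem \ref{t1.5}) by the bipolar argument, and then deduces the num\'eraire-based/local-martingale version (Theorem \ref{t1.4}) by localizing $S$ through a sequence of stopping times, applying Theorem \ref{t1.5} to the stopped processes with shrinking transaction costs $\la_n\nearrow\la$, concatenating the resulting consistent price systems, and passing to the limit via the closedness of $\A^M_{nb}$.
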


We shall formulate below two versions which turn the above ``meta-theorem" into precise mathematical statements. Let us first comment on the history of the above result. E.~Jouini and H.~Kallal in their pioneering paper \cite{JK95} considered a Hilbert space setting and proved a version of the above theorem in this context. They have thus established a perfect equivalent to the paper \cite{HK79} of Harrison-Kreps, replacing the frictionless theory by a model involving proportional transaction costs.

Y.~Kabanov \cite{K99} proposed a numéraire-free setting of multi-currency markets (see \cite{KS09}) for more detailed information) which is much more general than the present setting. In \cite{KS02} Y.~Kabanov and Ch.~Stricker proved a version of the super-hedging theorem in Kabanov's model under the assumption of continuity of the exchange rate processes. This continuity assumption was removed by L.~Campi and the author in \cite{CS06} thus establishing a general version of the super-hedging theorem in Kabanov's framework. 
However, due to the generality of the model considered in \cite{K99}, \cite{KS02}, and \cite{CS06}, the precise definitions of e.g.~self-financing portfolios and admissibility are sometimes difficult to check in applications.

We therefore change the focus in the present paper and concentrate on a more concrete setting with just one stock and one (normalised) bond, as well as fixed transaction costs $\lambda >0.$ Our aim is to establish clear-cut and easy-to-apply versions of the above super-hedging ``meta-theorem" \ref{t1.3}. Most importantly, we shall clarify the difference between a numéraire-free and a numéraire-based notion of admissible portfolios and its correspondence to the concepts of martingales and local martingales. This is somewhat analogue to the "numéraire-free" and "numéraire-based" versions of the Fundamental Theorem of Asset Pricing under Transaction Costs established in \cite{GRS10}. In the frictionless setting, analogous results are due to J.~Yan \cite{Y05} (compare also \cite{DS95}, and \cite{Y98}).
\vskip10pt
We now state the two versions of the super-hedging theorem which we shall prove in this paper. The terms appearing in the statements will be carefully defined in the next section.

\begin{theorem}[numéraire-based super-hedging]\label{t1.4}

Fix an $\mathbb{R}_+$-valued adapted c\`adl\`ag process $S=(S_t)_{0 \leq t \leq T}$, transaction costs $0 < \lambda < 1$, and a contingent claim which pays $X_T$ many units of bond at time $T$. The random variable $X_T$ is assumed to be uniformly bounded from below. Assume that, for each $0 < \lambda' < 1$, the process $S$ satisfies $(CPS^{\lambda'})$ in a local sense. For a number $X_0 \in \mathbb{R}$, the following assertions are equivalent.
\begin{itemize}
\item [(i)] There is a self-financing trading strategy $\varphi=(\varphi^0_t, \varphi^1_t)_{0 \leq t \leq T}$ such that 
$$\varphi_0=(X_0, 0) \quad \mbox{and} \quad \varphi_T=(X_T,0)$$
which is admissible in the following numéraire-based sense: there is $M \geq 0$ such that, for every $[0,T]$-valued stopping time $\tau,$
\begin{equation}\label{D3.1}
V_\tau(\varphi) \geq -M, \qquad \mbox{a.s.}
\end{equation}
\item [(ii)] For every consistent local price system, i.e.~for every probability measure $Q$, equivalent to $\mathbb{P}$, such that there is a {\it local martingale} $\widetilde{S}=(S_t)_{0 \leq t \leq T}$ under $Q$, taking its values in the bid-ask spread $[(1-\lambda) S,S]=([(1-\lambda) S_t, S_t])_{0 \leq t \leq T}$, we have
\begin{equation}\label{zp4}
X_0 \geq \mathbb{E}_Q[X_T].
\end{equation}
\end{itemize}
\end{theorem}

\begin{theorem}[numéraire-free super-hedging]\label{t1.5}

Fix an $\mathbb{R}_+$-valued adapted c\`adl\`ag process $S=(S_t)_{0 \leq t \leq T}$, transaction costs $0 < \lambda < 1$, and consider a non-negative contingent claim which pays $X_T$ many units of bond at time $T$. The random variable $X_T$ is assumed to be bounded from below by a multiple of $(1+S_T).$ Assume that, for each $0 < \lambda' < 1$ the process $S$ satisfies $(CPS^{\lambda'})$ in a non-local sense. For a number $X_0 \in \mathbb{R}$, the following assertions are equivalent.

\begin{itemize}
\item [(i)] There is a self-financing trading strategy $\varphi=(\varphi^0_t, \varphi^1_t)_{0 \leq t \leq T}$ such that 
$$\varphi_0=(X_0, 0) \quad \mbox{and} \quad \varphi_T= (X_T, 0)$$
which is admissible in the following sense: there is $M \geq 0$ such that, for every $[0,T]$-valued stopping time $\tau,$ 
\begin{equation}\label{D4}
V_{\tau}(\varphi) \geq -M(1+S_{\tau}), \quad \mbox{a.s.}
\end{equation}
\item [(ii)] For every consistent price system, i.e.~for every probability measure $Q$, equivalent to $\mathbb{P}$, such that there is a {\it martingale} $\widetilde{S}=(S_t)_{0 \leq t \leq T}$ under $Q$, taking its values in the bid-ask spread $[(1-\lambda) S,S]=([(1-\lambda) S_t, S_t])_{0 \leq t \leq T}$ we have
\begin{equation}\label{zp4a}
X_0 \geq \mathbb{E}_Q[X_T]
\end{equation}
\end{itemize}
\end{theorem}

Why do we speak about "numéraire-based" and "numéraire-free"? The admissibility condition of Theorem \ref{t1.4} refers to the bond as numéraire. Condition \eqref{D3.1} means that an agent can cover the trading strategy $\varphi$ by holding $M$ units of bond. In contrast, condition \eqref{D4} means that an agent can cover the trading strategy $\varphi$ by holding $M$ units of bond as well as $M$ units of stock. The latter assumption is symmetric between stock and bond. It does not single out one asset as numéraire and is therefore called "numéraire-free".

\section{Definitions and Notations}

We consider a financial market consisting of one riskless asset and one risky asset. The riskless asset has constant price $1$ and can be traded without transaction cost. The price of the risky asset is given by a strictly positive adapted c\`adl\`ag stochastic process $S=(S_t)_{0 \leq t \leq T}$ on some underlying filtered probability space $\big(\Omega, \mathcal{F}, (\mathcal{F}_t)_{0 \leq t \leq T}, P\big)$ satisfying the usual assumptions of right continuity and completeness. In addition, we assume that $\mathcal{F}_0$ is trivial. For technical reasons (compare \cite{CS06}) we also assume (w.l.g.) that $\mathcal{F}_T= \mathcal{F}_{T-}$ and $S_T=S_{T-}.$

\emph{Trading strategies} are modeled by $\R^2$-valued, predictable processes $\vp=(\vp^0_t,\vp^1_t)_{0\leq t\leq T}$ of finite variation, where $\vp^0_{t}$ and $\vp^1_{t}$ denote the holdings in units of the riskless and the risky asset, respectively, after rebalancing the portfolio at time $t$. For any process $X$ of finite variation we denote by $X=X_0+X^{\uparrow}-X^{\downarrow}$ its Jordan-Hahn decomposition into two non-decreasing processes $X^{\uparrow}$ and $X^{\downarrow}$ both null at zero. The total variation $\Var_t(X)$ of $X$ on $[0,t]$ is then given by $\Var_t(X)=X^{\uparrow}_t+X^{\downarrow}_t$ and the continuous part $X^c$ of $X$ by
$$X^c_t:=X_t-\sum_{s<t} \Delta_+ X_s -  \sum_{s\leq t} \Delta X_s,$$
where $\Delta_+ X_t:=X_{t+}-X_t$ and $\Delta X_t:=X_t-X_{t-}$. Trading in the risky asset incurs proportional transaction costs of size $\lambda \in (0,1).$ This means that one has to pay a (higher) ask price $S_t$ when buying risky shares at time $t$ but only receives a (lower) bid price $(1-\lambda)S_t$ when selling them. 

A strategy $\vp=(\vp^0_t,\vp^1_t)_{0\leq t\leq T}$ is called \emph{self-financing under transaction costs $\lambda$} if
\begin{align}\label{sfc}
\int^t_s d\varphi^0_u \leq - \int^t_s S_u d\varphi^{1,\uparrow}_u + \int^t_s(1-\lambda)S_u d\varphi^{1,\downarrow}_u
\end{align}
a.s.~for all $0 \leq s < t \leq T$, where
\begin{align*}
\int^t_s S_u d\varphi^{1,\uparrow}_u&:= \int^t_s S_u d\varphi_u^{1,\uparrow,c} + \sum_{s < u \leq t}S_{u-}\Delta \varphi_u^{1,\uparrow} + \sum_{s \leq u < t} S_u \Delta_+ \varphi_u^{1,\uparrow},\\
\int^t_s(1-\lambda) S_u d\varphi_u^{1,\downarrow}&:= \int^t_s (1-\lambda) S_u d\varphi_u^{1, \downarrow,c} + \sum_{s < u \leq t} (1-\lambda)S_{u-}\Delta\varphi^{1, \downarrow}_u + \sum_{s \leq u < t} (1-\lambda)S_u \Delta_+ \varphi^{1,\downarrow}_u
\end{align*}
can be defined by using Riemann-Stieltjes integrals, as $S$ is c\`adl\`ag.
The self-financing condition \eqref{sfc} then states that purchases and sales of the risky asset are accounted for in the riskless position:

\begin{align}
d\varphi^{0,c}_t&\leq-S_td\varphi^{1,\uparrow,c}_t+(1-\lambda)S_td\varphi^{1,\downarrow,c}_t, \quad &0 \leq t \leq T, \label{sf2.1}\\
\Delta\varphi^0_t&\leq-S_{t-}\Delta\varphi^{1,\uparrow}_t +(1-\lambda)S_{t-}\Delta\varphi^{1,\downarrow}_t, \quad &0 \leq t \leq T, \label{sf2.2} \\
\Delta_+\varphi^0_t&\leq-S_t\Delta_+\varphi^{1,\uparrow}_t +(1-\lambda)S_t\Delta_+\varphi^{1,\downarrow}_t, \quad &0 \leq t \leq T. \label{sf2.3}
\end{align}

We define the \emph{liquidation value} at time $t$ by
\begin{align}
V_t(\vp):={}&\vp^0_t+(\vp^1_t)^+(1-\lambda) S_t-(\vp^1_t)^-S_t.
\end{align}

We have the following two notions of admissibility

\begin{definition}\label{d2.1}
\begin{itemize}
\item [(a)] A self-financing trading strategy $\varphi$ is called admissible in a numéraire-based sense if there is $M>0$ such that, for every $[0,T]$-valued stopping time $\tau$, 
\begin{equation}\label{p5}
V_\tau(\varphi) \geq -M, \quad \mbox{a.s.,}
\end{equation}

\item [(b)] A self-financing trading strategy $\varphi$ is called admissible in a numéraire-free sense if there is $M>0$ such that, for every $[0,T]$-valued stopping time $\tau$, 
\begin{equation}\label{p5a}
V_t(\varphi) \geq -M(1+S_t), \quad \mbox{a.s.}
\end{equation}

\end{itemize}
\end{definition}
\vskip10pt
\noindent
Here are typical examples of self-financing trading strategies.
\vskip10pt
\begin{definition}\label{def4.7}
Fix $S$ and $\la >0,$ as above, let $\tau :\Omega \to [0,T] \cup \{\i\}$ be a stopping time, and let $f_\tau ,g_\tau$ be $\F_{\tau}$-measurable $\R_+$-valued functions. We define the
corresponding \textnormal{ask} and \textnormal{bid} processes as
\begin{align}
a_t &=  (-S_\tau ,1)f_\tau ~  \mathbbm{1}_{\rrbracket \tau ,T\rrbracket} (t),  &0\le t\le T, \label{155}  \\
b_t &=  ((1-\la)S_\tau, -1) g_\tau  ~ \mathbbm{1}_{\rrbracket \tau ,T\rrbracket}(t),  &0\le t\le T. \label{156}
\end{align}

Similarly, let $\tau:\Omega \to [0,T] \cup \{\infty \}$ be a predictable stopping time, and let $f_\tau ,g_\tau$ be $\F_{\tau-}$-measurable $\R_+$-valued functions. We define
\begin{align}
a_t &=  (-S_{\tau-} ,1)f_\tau ~  \mathbbm{1}_{\llbracket \tau ,T\rrbracket} (t),  &0\le t\le T, \label{B1}  \\
b_t &=  ((1-\la)S_{\tau-},-1) g_\tau  ~ \mathbbm{1}_{\llbracket \tau ,T\rrbracket}(t),  &0\le t\le T. \label{B1a}
\end{align}

We call a process $\varphi=(\varphi^0_t ,\varphi^1_t)_{0\le t\le T}$ a \textnormal{predictable, simple, self-financing} process, if it is a finite sum of ask and bid processes as above. 
\end{definition}

We note that $a=(a^0_t, a^1_t)_{0 \leq t \leq T}$ defined in \eqref{155} is admissible (in either sense of the above definitions) if the random variable $f_\tau S_\tau$ is bounded from above. As regards $b=(b^0_t, b^1_t)_{0 \leq t \leq T}$ defined in \eqref{156} it is admissible in the numéraire-free sense if $g_\tau$ is bounded; it is admissible in the numéraire-based sense if the process $(g_\tau S_t)_{\tau < t \leq T}$ is uniformly bounded. 

Analogous remarks apply to \eqref{B1} and \eqref{B1a}.
\vskip10pt

\section{Closedness in measure}
The following lemma was proved by L.~Campi and the author in the general framework of Kabanov's modeling of $d$-dimensional currency markets. Here we adapt
the proof for a single risky asset model. 

In section 2 we postulated as a qualitative --- a priori --- assumption that the strategies
$\varphi=(\varphi^0,\varphi^1)$ have {\it finite variation}. The next lemma provides an automatic --- a posteriori --- quantitative control on the size of the finite variation. Note that we make a combination of the weaker versions of our hypotheses: as regards the no-arbitrage type assumption we only suppose $(CPS^{\lambda'})$ in the local sense and as regards admissibility we only require it in the numéraire-free sense.

\begin{lemma}\label{l4.9}
Let $S$ and $0 < \la <1$ be as above, and suppose that $(CPS^{\la '})$ is satisfied in the local sense, for some $0<\la '<\la$. Fix $M>0.$ Then the total variation of the process $(\varphi^0_t,\varphi^1_t)_{0\le t\le T}$ remains bounded in $L^0(\ofp)$, when $\varphi=(\varphi^0,\varphi^1)$ 
runs through all $M$-admissible $\lambda$-self-financing strategies  (in the numéraire-free sense \eqref{p5a}).

More explicitly: for $M>0$ and $\ve >0$, there is $C>0$ such that, for all $M$-admissible, $\lambda$-self-financing strategies $(\varphi^0,\varphi^1),$ starting at
$(\varphi^0_{0},\varphi^1_{0})=(0,0),$ and all increasing sequences
$0=\tau_0 < \tau_1 <\ldots <\tau_K=T$ of stopping times we have
\begin{align}\label{165}
&\p\left[\sum\limits^K_{k=1} \ |\varphi^0_{\tau_k} -\varphi^0_{\tau_{k-1}}| \ \geq C\right] <\ve,  \\
&\p\left[\sum\limits^K_{k=1} \ |\varphi^1_{\tau_k} -\varphi^1_{\tau_{k-1}}| \ \geq C\right] <\ve. \label{166} \\ \nonumber
\end{align}
\end{lemma}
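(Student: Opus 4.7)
The plan is to adapt the Campi--Schachermayer argument of \cite{CS06} to the present one-dimensional setting by building a shadow price that lies \emph{strictly} inside the $\lambda$-bid--ask spread, thereby converting the transaction-cost friction into a uniform penalty on the total variation of the trading strategy. Given $0 < \la' < \la$, pick a consistent local price system $(\widetilde{S}, Q)$ for $\la'$ and scale it: set $\widetilde{S}^{\rho}_t := \rho\, \widetilde{S}_t$ for any fixed $\rho$ in the non-empty open interval $\bigl(\tfrac{1-\la}{1-\la'},\, 1\bigr)$. The scaled process $\widetilde{S}^{\rho}$ is still a local $Q$-martingale, and by the choice of $\rho$ there is $\kappa = \kappa(\la,\la',\rho) > 0$ with
\begin{equation*}
S_t - \widetilde{S}^{\rho}_t \ \ge\ \kappa\, S_t \qquad \text{and}\qquad \widetilde{S}^{\rho}_t - (1-\la)S_t \ \ge\ \kappa\, S_t, \qquad 0 \le t \le T.
\end{equation*}

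I would then work with the shadow liquidation value $\widetilde{V}_t := \vp^0_t + \vp^1_t\,\widetilde{S}^{\rho}_t$. Writing the self-financing condition in equality form as $d\vp^0_t = -S_t\, d\vp^{1,\uparrow}_t + (1-\la)S_t\, d\vp^{1,\downarrow}_t - dA_t$ with $A$ non-decreasing and $A_0 = 0$, an integration-by-parts computation (together with careful bookkeeping of the continuous, left-jump and right-jump contributions \eqref{sf2.1}--\eqref{sf2.3}) gives
\begin{equation*}
d\widetilde{V}_t \ =\ (\widetilde{S}^{\rho}_t - S_t)\,d\vp^{1,\uparrow}_t + ((1-\la)S_t - \widetilde{S}^{\rho}_t)\,d\vp^{1,\downarrow}_t - dA_t + \vp^1_{t-}\,d\widetilde{S}^{\rho}_t,
\end{equation*}
and the two-sided gap above then yields the clean differential inequality
\begin{equation*}
d\widetilde{V}_t + \kappa\, S_t\, d\Var_t(\vp^1) + dA_t \ \le\ \vp^1_{t-}\,d\widetilde{S}^{\rho}_t.
\end{equation*}
Hence the process $\widetilde{V} + \kappa\int_0^\cdot S_u\, d\Var_u(\vp^1) + A$ is a local $Q$-supermartingale.

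To extract a uniform $L^1(Q)$-bound I use the num\'eraire-free admissibility \eqref{p5a}. Since $\widetilde{S}^{\rho}\in[(1-\la)S,S]$ one checks directly that $\widetilde{V}_t \ge V_t(\vp) \ge -M(1+S_t)$, and combining with $S_t \le \widetilde{S}^{\rho}_t/\bigl(\rho(1-\la')\bigr)$ gives $\widetilde{V}_t \ge -M - \tfrac{M}{\rho(1-\la')}\widetilde{S}^{\rho}_t$. Adding $M + \tfrac{M}{\rho(1-\la')}\widetilde{S}^{\rho}$ (a constant plus a local $Q$-martingale) to the supermartingale above produces a \emph{non-negative} local $Q$-supermartingale, hence by Fatou an honest supermartingale. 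Comparing expectation at $T$ with the initial value, and throwing away the non-negative term coming from $\widetilde{V}_T+M+\tfrac{M}{\rho(1-\la')}\widetilde{S}^{\rho}_T$, one obtains
\begin{equation*}
\kappa\, \E_Q\!\left[\int_0^T S_u\, d\Var_u(\vp^1)\right] + \E_Q[A_T] \ \le\ M + \tfrac{M}{\rho(1-\la')}\,\widetilde{S}^{\rho}_0,
\end{equation*}
with a right-hand side that depends only on $M,\la,\la',\rho$ and $\widetilde{S}_0$, not on $\vp$.

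It remains to descend to $L^0(\ofp)$ and to both components of $\vp$. Since $Q \sim \p$, a standard truncation of $d\p/dQ$ turns uniform $L^1(Q)$-boundedness into uniform $\p$-tightness of both $\int_0^T S\,d\Var(\vp^1)$ and $A_T$. The price process $S$ being strictly positive and c\`adl\`ag, $\inf_{t\in[0,T]}S_t > 0$ $\p$-a.s., so on a set $\{\inf_t S_t \ge \delta\}$ of $\p$-measure $\ge 1-\ve$ one has $\Var_T(\vp^1) \le \delta^{-1}\int_0^T S_u\, d\Var_u(\vp^1)$; combined with $\sum_k|\vp^1_{\tau_k}-\vp^1_{\tau_{k-1}}| \le \Var_T(\vp^1)$ this delivers \eqref{166}. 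For \eqref{165}, the decomposition $d\vp^0 = -S\,d\vp^{1,\uparrow} + (1-\la)S\,d\vp^{1,\downarrow} - dA$ gives $\Var_T(\vp^0) \le \int_0^T S\,d\Var(\vp^1) + A_T$, which is already tight under $\p$. The main obstacle will be the careful pathwise bookkeeping of left- and right-continuous jumps in the integration-by-parts step that justifies the differential inequality for $d\widetilde{V}_t$; the scaling trick, which converts the one-sided hypothesis $(CPS^{\la'})$ into a strictly interior shadow price with a two-sided penalty $\kappa S\, d\Var(\vp^1)$, is the conceptual heart of the argument.
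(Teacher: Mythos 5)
Your proof is correct in spirit and takes a genuinely different route from the paper. Both arguments start from a $\lambda'$-consistent local price system $(\widetilde{S},Q)$ with $\lambda'<\lambda$ and exploit the slack between $\lambda'$ and $\lambda$, but they convert it into a uniform bound in different ways. The paper \emph{boosts the cash account}: it shows that $\varphi' := \bigl(\varphi^0 + \tfrac{\lambda-\lambda'}{1-\lambda}\varphi^{0,\uparrow},\,\varphi^1\bigr)$ is $\lambda'$-self-financing and still $M$-admissible, invokes an optional strong supermartingale result from \cite{S13} for $(\varphi^0)'+\varphi^1\widetilde S$, and thereby gets the bound $\E_Q[\varphi^{0,\uparrow}_T]\le \tfrac{M(1+\E_Q[S_T])}{\lambda-\lambda'}$; the estimates on $\varphi^{0,\downarrow}$, $\varphi^{1,\uparrow}$ and $\varphi^{1,\downarrow}$ then follow from admissibility, the self-financing inequality divided by $S$, and $\varphi^1_T=0$. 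You instead scale $\widetilde{S}$ into the \emph{strict interior} of the $\lambda$-spread, making each transaction pay a penalty $\kappa S\,d\Var(\varphi^1)$, and deduce $\E_Q[\int_0^T S\,d\Var(\varphi^1)]$ and $\E_Q[A_T]$ are bounded in one stroke. Your version also sidesteps the paper's preliminary stopping to make $\widetilde S$ a true martingale (the non-negative local supermartingale argument plus triviality of $\F_0$ suffices) and does not need to assume $\varphi^1_T=0$. Both approaches need the technical integration-by-parts bookkeeping of the three jump types, which you acknowledge but defer.

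There is one genuine gap: you assert that $S$ being strictly positive and c\`adl\`ag implies $\inf_{t\in[0,T]}S_t>0$ a.s. This is false in general --- a strictly positive c\`adl\`ag path can have a left limit equal to zero, whence the infimum over $[0,T]$ vanishes. The paper addresses exactly this point: it derives $\inf_t S_t>0$ from the fact that $\widetilde S$ is a strictly positive $Q$-martingale (after stopping), hence by the minimum principle for non-negative supermartingales its paths, together with their left limits, stay bounded away from zero on the full-measure event $\{\widetilde S_T>0\}$. In your scheme the same repair is available through $\widetilde S^{\rho}$: it is a strictly positive local $Q$-martingale with $\widetilde S^{\rho}_T>0$ a.s., so the minimum principle gives $\inf_t\widetilde S^{\rho}_t>0$ a.s., and then $S_t\ge \widetilde S^{\rho}_t/\rho$ yields $\inf_t S_t>0$. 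With that correction your argument is complete.
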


\underline{Proof:}
Fix $0<\la '<\la$ as above. By hypothesis there is a probability measure $Q \sim \p,$ and a local $Q$-martingale $(\widetilde{S}_t)_{0\le t\le T}$ such that
$\widetilde{S}_t\in [(1-\la ')S_t,S_t].$ As the assertion of the lemma is of local type we may assume, by stopping, that $\widetilde{S}$ is a true martingale. We also may assume w.l.g.~that $\varphi^1_T=0$, i.e., that the  position in stock is liquidated at time $T$.

Fix $M>0$ and a $\lambda$-self-financing, $M$-admissible (in the sense of \eqref{p5a}) process $(\varphi^0_t,\varphi^1_t)_{t\geq 0},$ starting at 
$(\varphi^0_{0},\varphi^1_{0})=(0,0).$ Write $\varphi^0 =\varphi^{0,\uparrow} -\varphi^{0,\downarrow}$ and $\varphi^1 =\varphi^{1,\uparrow}-\varphi^{1,\downarrow}$ as the 
canonical differences of increasing processes. We shall show that
\begin{equation}\label{p10}
\mathbb{E}_Q[\varphi_T^{0,\uparrow}] \leq \frac{M(1+\mathbb{E}_Q[S_T])}{\lambda-\lambda'}
\end{equation}
Define the process $\varphi'=((\varphi^0)',(\varphi^1)')$ by
\begin{align*}
\varphi'_t=\left((\varphi^0)'_t,(\varphi^1)'_t\right)=\left(\varphi^0_t +\frac{\la-\la'}{1-\la} \varphi^{0,\uparrow}_t,\varphi^1_t\right), \qquad\qquad {0\le t\le T}.
\end{align*}
This is a self-financing process under transaction costs $\la'$: indeed, whenever $d\varphi^0_t >0$ so that $d\varphi^0_t=d\varphi^{0,\uparrow}_t,$ the agent sells stock and receives 
$d\varphi^{0,\uparrow}_t=(1-\la)S_t d\varphi_t^{1,\downarrow}$ (resp. $(1-\la') S_t d\varphi_t^{1,\downarrow} = \tfrac{1-\la'}{1-\la} d\varphi^{0,\uparrow}_t$) many bonds under transaction costs
$\la$ (resp.~$\la'$). The difference between these two terms is $\tfrac{\la-\la'}{1 -\la} d\varphi^{0,\uparrow}_t$; this is the amount by which the $\la'$-agent does better than
the $\la$-agent. It is also clear that $((\varphi^0)',(\varphi^1)')$ under transaction costs $\la'$ still is a $M$-admissible strategy (in the numéraire-free sense of \eqref{p5a}).

By Proposition 2.3 of \cite{S13} the process 
$$((\varphi^0)'_t + (\varphi^1)'_t \widetilde{S}_t)_{0 \leq t \leq T}=((\varphi^0)'_t+\varphi^1_t\tilde{S}_t)_{0\le t\le T} =(\varphi^0_t+\tfrac{\la-\la'}{1-\la}\varphi^{0,\uparrow}_t+ \varphi^1_t\tilde{S}_t)_{0\le t\le T}$$
is an optional strong  $Q$-super-martingale. Hence
\begin{equation}\label{D6}
\E_Q[\varphi^0_T+\varphi^1_T\tilde{S}_T]+\tfrac{\la-\la'}{1-\la}\E_Q[\varphi_T^{0,\uparrow}]\le 0.
\end{equation}
As 
\begin{equation}\label{P10}
\varphi^0_T=\varphi^0_T + \varphi^1_T \widetilde{S}_T \geq -M(1 + S_T).
\end{equation}
we have shown \eqref{p10}.

\medskip

To obtain a control on $\varphi^{0,\downarrow}_T$ too, note that $\varphi^0_T=\varphi^0_T+\varphi^1_T\widetilde{S}_T \geq -M(1+S_T)$ as $\varphi^1_T=0$ so that $\varphi^{0,\downarrow}_T\le \varphi^{0,\uparrow}_T+M(1+S_T).$ 
Therefore we obtain the following estimate for the total variation $\varphi^{0,\uparrow}_T+\varphi^{0,\downarrow}_T$ of $\varphi^0$
\begin{equation}\label{168}
\E_Q\left[\varphi^{0,\uparrow}_T+\varphi^{0,\downarrow}_T\right]\le M\left(\frac{2}{\la-\la'}+1\right) \bigg ( 1+\mathbb{E}_Q[S_T] \bigg).
\end{equation}

The passage from the $L^1(Q)$-estimate \eqref{168} to the $L^0(\p)$-estimate \eqref{165} is standard: for $\ve >0$ there is $\delta >0$ such that for subsets
$A\in\mathcal{F}$ with $Q[A] <\delta$ we have
$\p[A] <\ve.$ Letting $C=\tfrac{M}{\delta}(\tfrac{2}{\la-\la'} +1) (1+\mathbb{E}_Q[S_T])$ and applying Tschebyscheff to \eqref{168} we get
\begin{equation}\label{p8}
\p\left[\varphi^{0,\uparrow}_T+\varphi^{0,\downarrow}_T\geq C\right] <\ve,
\end{equation}
which implies \eqref{165}.

As regards \eqref{166} it follows from \eqref{sfc} that 
\begin{equation}\label{J1}
d\varphi^{1, \uparrow}_t \leq \frac{d \varphi^{0,\downarrow}_t}{S_t},
\end{equation}
or, more precisely, by \eqref{sf2.1}, \eqref{sf2.2}, and \eqref{sf2.3},
\begin{equation}\label{J2}
d\varphi^{1, \uparrow,c}_t \leq \frac{d \varphi^{0,\downarrow,c}_t}{S_t},
\end{equation}
\begin{equation}\label{J3}
\Delta\varphi^{1, \uparrow}_t \leq \frac{\Delta \varphi^{0,\downarrow}_t}{S_{t-}},
\end{equation}
\begin{equation}\label{J4}
\Delta_+\varphi^{1, \uparrow}_t \leq \frac{\Delta_+ \varphi^{0,\downarrow}_t}{S_t}.
\end{equation}

By assumption the trajectories of $(S_t)_{0 \leq t \leq T}$ are strictly positive. In fact, we even have, for almost all trajectories $(S_t(\omega))_{0 \leq t \leq T}$, that $\inf_{0 \leq t \leq T} S_t(\omega)$ is strictly positive. Indeed, $\widetilde{S}$ being a $Q$-martingale with $\widetilde{S}_T > 0$ a.s.~satisfies that $\inf_{0 \leq t \leq T} \widetilde{S}_t(\omega)$ is $Q$-a.s.~and therefore $\mathbb{P}$-a.s.~strictly positive.

Summing up, for $\ve >0$, we may find $\delta >0$ such that
\begin{align*}
\p\left[\inf\limits_{0\le t\le T} S_t <\delta\right] <\frac{\ve}{2}.
\end{align*}

Hence we may control $\varphi_T^{1,\uparrow}$ by using \eqref{J1} and estimating $\varphi^{0,\downarrow}$ by \eqref{p8}. Finally, we can control $\varphi_T^{1,\downarrow}$ by simply observing that $\varphi^{1, \uparrow}_T - \varphi^{1,\downarrow}_T=\varphi^1_T - \varphi^1_0=0.$
\hfill\vrule height6pt width6pt depth0pt \medskip

\begin{remark}\label{r4.10}
\textnormal{
In the above proof we have shown that the elements $\nobreak{\varphi^{0,\uparrow}_T, \varphi^{0,\downarrow}_T, \varphi^{1,\uparrow}_T, \newline \varphi^{1,\downarrow}_T}$ remain bounded in $L^0(\ofp),$ when $(\varphi^0,\varphi^1)$ runs through the $M$-admissible (in the numéraire-free sense \eqref{p5a}) self-financing processes and $\varphi^0=\varphi^{0,\uparrow} -\varphi^{0,\downarrow}$ and
$\varphi^1=\varphi^{1,\uparrow}-\varphi^{1,\downarrow}$ denote the canonical decompositions. For later use we remark
that the proof shows, in fact, that also the convex combinations of the functions $\varphi^{0,\uparrow}_T$ etc.~remain
bounded in $L^0(\ofp).$ Indeed the estimate \eqref{p10} shows that the convex hull of the functions 
$\varphi^{0,\uparrow}_T$ is bounded in $L^1(Q)$ and \eqref{168} yields the same for $\varphi^{0,\downarrow}_T.$ For
$\varphi^{1,\uparrow}_T$ and $\varphi^{1,\downarrow}_T$ the argument is similar.}
\end{remark}
\vskip10pt

We can now formulate the main result of this section, in a numéraire-based as well as a numéraire-free version (Theorem \ref{t4.14} and Theorem \ref{t4.14.1})

\begin{definition}
For $M>0$ we denote by $\mathcal{A}_{nb}^M$ (resp.  $\mathcal{A}_{n\!f}^M$) the set of pairs $(\varphi^0_T, \varphi^1_T) \in L^0(\mathbb{R}^2)$ of terminal values of self-financing trading strategies $\varphi$, starting at $\varphi_0=(0,0)$, which are $M$-admissible in the numéraire-based sense \eqref{p5} (resp. in the numéraire-free sense \eqref{p5a}).

We denote by $\C^M_{nb}$ (resp.~$\C^M_{n\!f}$) the set of random variables $\varphi^0_T \in L^0$ such that $(\varphi^0_T, 0)$ is in $\mathcal{A}^M_{nb}$ (resp.~in $\mathcal{A}^M_{n\!f}$).

We shall occasionally drop the sub-scripts $nb$ (resp.~$n\!f$) when it is clear from the context that we are in the numéraire-based (resp.~numéraire-free) setting.
\end{definition}

\begin{theorem}\label{t4.14}
(numéraire-based version) Fix $S=(S_t)_{0\le t\le T}$ and $0 < \la < 1$ as above, and suppose that $(CPS^{\la'})$ is satisfied in a local sense, for each $0<\la'<\la$. For $M>0,$ the convex set $\A^M_{nb}\subseteq L^0(\ofp;\R^2)$ as well as the convex set $\C^M_{nb}\subseteq L^0(\ofp)$ are closed with respect to the topology of convergence in measure.
\end{theorem}

{ \underline{Proof}: }\
Fix $M>0$ and let $(\varphi^{n}_T)^\i_{n=1}=(\varphi^{0,n}_T,\varphi^{1,n}_T)^\i_{n=1}$ be a sequence in $\A^M=\A^M_{nb}$ converging a.s.~to some $\varphi_T=(\varphi^0_T, \varphi^1_T) \in L^0(\mathbb{R}^2).$ We have to show that $\varphi_T \in \mathcal{A}^M.$ We may find self-financing, admissible (in the numéraire-based sense) strategies 
$\varphi^n=(\varphi^{0,n}_t,\varphi^{1,n}_t)_{0\le t\le T},$ starting at
$(\varphi^{0,n}_{0},\varphi^{1,n}_{0})=(0,0),$ and ending with terminal values $(\varphi^{0,n}_T,\varphi^{1,n}_T).$ By the assumption $(CPS^{\lambda'})$, for {\it each} $0 < \lambda' < \lambda$, we may conclude that these processes are $M$-admissible in the numéraire-based sense (\cite{S13}, Th.~1.7). As above, decompose canonically these processes as $\varphi^{0,n}_T=\varphi_T^{0,n,\uparrow} - \varphi_T^{0,n,\downarrow},$ and $\varphi^{1,n}_T =\varphi^{1,n,\uparrow}_T 
-\varphi^{1,n,\downarrow}_T.$ By Lemma \ref{l4.9} and the subsequent remark we know that $(\varphi_T^{0,n,\uparrow})^\i_{n=1},(\varphi^{0,n,\downarrow}_T)^\i_{n=1}, (\varphi^{1,n,\uparrow}_T)^\i_{n=1},$ and 
$(\varphi^{1,n,\downarrow}_T)^\i_{n=1}$ as well as their convex combinations are bounded in $L^0(\ofp)$, so that by Lemma A1.1a in \cite{DS94} we may find convex combinations converging
a.s.~to elements $\varphi_T^{0,\uparrow}, \varphi_T^{0,\downarrow}, \varphi_T^{1,\uparrow},$ and $\varphi_T^{1,\downarrow} \in L^0(\ofp).$ To alleviate notation we denote these 
sequences of convex combinations still by the original sequences.
We claim that $(\varphi^0_T,\varphi^1_T)=(\varphi_T^{0,\uparrow} - \varphi_T^{0,\downarrow},\varphi_T^{1,\uparrow} -\varphi_T^{1,\downarrow})$ is in $\A^M$ which will readily show 
the closedness of $\A^M$ with respect to the topology of convergence in measure.

By inductively passing to convex combinations, still denoted by the original sequences, we may, for each rational number $r\in[0,T[$, assume that $(\varphi^{0,n,\uparrow}_r)^\i_{n=1},
(\varphi^{0,n,\downarrow}_r)^\i_{n=1}, (\varphi^{1,n,\uparrow}_r)^\i_{n=1},$ and $(\varphi^{1,n,\downarrow}_r)^\i_{n=1}$ converge to some elements 
$\bar{\varphi}^{0,\uparrow}_r , \bar{\varphi}^{0,\downarrow}_r, \bar{\varphi}^{1,\uparrow}_r,$ and $\bar{\varphi}^{1,\downarrow}_r$ in $L^0(\ofp).$ 
By passing to a diagonal subsequence, we may suppose that this convergence holds true for all rationals $r\in[0,T[.$

Clearly the four processes $\bar{\varphi}^{0,\uparrow}_{r\in \mathbb{Q}\cap [0,T[}$ etc, indexed by the rationals $r$ in $[0,T[,$ still are a.s.~increasing and define an $M$-admissible 
process in the numéraire-based sense of \eqref{p5}, indexed by $[0,T[\cap \mathbb{Q}$. 

We have to extend these processes to all real numbers $t\in [0,T].$ This is done by first letting
\begin{equation}\label{178}
\widehat{\varphi}_t^{0,\uparrow} =\lim\limits_{\substack{ r\searrow t \\ r\in \mathbb{Q}}} \bar{\varphi}^{0,\uparrow}_r, \qquad 0\le t <T,
\end{equation}
and $\widehat{\varphi}^{0,\uparrow}_{0} =0.$ The terminal value $\widehat{\varphi}_T^{0,\uparrow} = \varphi_T^{0,\uparrow}$ is still given by the first step of the construction. The c\`adl\`ag process $\widehat{\varphi}^{0,\uparrow}$ is not yet the desired limit as we  still have to take special care of the jumps of $\widehat{\varphi}^{0,\uparrow}$. The jumps of the process $\widehat{\varphi}^{0,\uparrow}$ can be exhausted by a sequence $(\tau_k)^\infty_{k=1}$ of stopping times. By passing once more to a sequence of convex combinations, still denoted by $(\widehat{\varphi}^{0,n,\uparrow})^\infty_{n=1}$, we may also assume that $(\varphi^{0,n,\uparrow}_{\tau_k})^\infty_{n=1}$ converges almost surely, for each $k \in \mathbb{N}$. Define
\begin{equation*}
\varphi^{0,\uparrow}_t=
\begin{cases}
\lim_{n \to \infty} \varphi^{0,n,\uparrow}_{\tau_k} \quad \mbox{if} \quad t=\tau_k,\quad \mbox{for some} \,\, k \in \mathbb{N}\\
\widehat{\varphi}^{0,\uparrow}_t \quad \mbox{otherwise.}
\end{cases}
\end{equation*}
This process is predictable. Indeed, there is a subset $\Omega' \subseteq \Omega$ of full measure $\mathbb{P}[\Omega']=1,$ such that $(\varphi^{0,n, \uparrow})^\infty_{n=1}$ converges pointwise to $\varphi^{0,\uparrow}$ {\it everywhere} on $\Omega' \times [0,T].$ The process $\varphi^{0, \uparrow}$ also is a.s.~non-decreasing in $t \in [0,T].$ 
We thus have found a predictable process $\varphi^{0,\uparrow}=(\varphi^{0,\uparrow}_t)_{0 \leq t \leq T}$ such that a.s.~the sequence $(\varphi^{0,n,\uparrow}_t)_{0 \leq t \leq T}$ converges to $(\varphi^{0,\uparrow}_t)_{0 \leq t \leq T}$ for {\it all} $t \in T.$

The three other cases, $\varphi^{0,\downarrow},\varphi^{1,\uparrow},$ and $\varphi^{1,\downarrow}$ are treated in an analogous way. These processes are predictable, increasing, and satisfy condition \eqref{sfc}.

Finally, define the process $(\varphi^0_t,\varphi^1_t)_{0\le t\le T}$ as 
$(\varphi^{0,\uparrow}_t -\varphi^{0,\downarrow}_t,\varphi^{1,\uparrow}_t -\varphi^{1,\downarrow}_t)_{0\le t\le T}.$ It is predictable and $M$-admissible in the numéraire-based sense \eqref{p5} as this condition passes from the process $(\varphi^n)^\infty_{n=1}$ to the limit $\varphi$. Similarly, the process $\varphi$ satisfies the self-financing condition \eqref{sfc} as the convergence of the processes $(\varphi^n)^\infty_{n=1}$ takes place, for {\it  all} $t \in [0,T].$
We thus have shown that $\A^M=\A^M_{nb}$ is closed in $L^0(\mathbb{R}^2)$. 

The closedness of $\C^M=\C^M_{nb}$ in $L^0$ is an immediate consequence.
\hfill\vrule height6pt width6pt depth0pt \medskip

\begin{remark}\label{r3.5}
\textnormal{
We have not only proved a {\it closedness} property of $\A^M$ with respect to the topology of convergence in measure. Rather we have shown a {\it convex compactness}
property (compare \cite{KZ11}, \cite{Z09}). Indeed, we have shown that, for any sequence $(\varphi^n_T)^\i_{n=1}\in\A^M,$ we can find a sequence of convex combinations which converges a.s.~to an element $\varphi_T\in \A^M.$}

\textnormal{
For later use (proof of Theorem \ref{t1.4}) we also remark that the above proof yields the following technical variant of Theorem \ref{t4.14}. 
Let $0 < \lambda_n < \lambda$ be a sequence of reals increasing to $\lambda$ and $(\varphi^n_T)^\infty_{n=1}$ be in $\A^{M, \lambda_n}_{nb}$, 
where the super-script $\lambda_n$ indicates that $\varphi^n_T$ is the terminal value of an $M$-admissible $\lambda_n$-self-financing trading strategy starting at $(0,0)$. 
If $(\varphi^n_T)^\infty_{n=1}$ converges a.s.~to $\varphi^0_T$ we may conclude that $\varphi^0_T$ is the terminal value of a strategy $\varphi^0=(\varphi^{0,0}_t, \varphi^{1,0}_t)_{0 \leq t \leq T}$ 
which is $M$-admissible and $\lambda_n$-self-financing, for each $n\in\mathbb{N}$. From \eqref{sfc} we conclude that $\varphi^0$ is $\lambda$-self-financing.}
\end{remark}

\begin{theorem}\label{t4.14.1}
(numéraire-free version) Fix $S=(S_t)_{0\le t\le T}$ and $0 < \la < 1$ as above, and suppose that $(CPS^{\la'})$ is satisfied, in the non-local sense, for each $0<\la'<\la$. For $M>0,$ the convex set $\A^M_{nf}\subseteq L^0(\ofp;\R^2)$ as well as the convex set $\C^M_{nf}\subseteq L^0(\ofp)$ are closed with respect to the topology of convergence in measure.
\end{theorem}

\begin{proof}
As in the previous proof fix $M>0$ and let $(\varphi^n_T)^\infty_{n=1}=(\varphi^{0,n}_T, \varphi^{1,n}_T)^\infty_{n=1}$ be a sequence which we now assume to be in $\A^M=\A^M_{nf}$, converging a.s~to some $\varphi_T=(\varphi^0_T, \varphi^1_T)\in L^0(\mathbb{R}^2).$ We have to show that $\varphi_T \in \A^M.$ Again we may find self-financing, admissible (in the numéraire-free sense) strategies $(\varphi^{0,n}_T, \varphi^{1,n}_t)_{0 \leq t \leq T}$ starting at $(\varphi^{0,n}_0, \varphi^{1,n}_0)=(0,0)$, with terminal values $(\varphi^{0,n}_T, \varphi^{1,n}_T).$ We now apply Th 2.4 of \cite{S13} to conclude that these processes are $M$-admissible in the numéraire-free sense \eqref{p5a}.

We then may proceed verbatim as in the above proof to construct a limiting process $\varphi=(\varphi^0_t, \varphi^1_t)_{0 \leq t \leq T}$ which is predictable, $M$-admissible (in the numéraire-free sense) and has the prescribed terminal value. 
This again shows that $\A^M=\A^M_{nf}$ and $\C^M=\C^m_{nf}$ are closed in $L^0.$
\end{proof}

\section{The proof of Theorem \ref{t1.5}}
We now apply duality theory to the sets $\A^M$ and $\C^M$.
We first deal with the numéraire-free case where we follow the lines of \cite{K99}, \cite{KS02}, \cite{CS06} and \cite{KS09}.
As above fix a c\`adl\`ag adapted price process $S=(S_t)_{0 \leq t \leq T}$ and transaction costs $0 < \lambda <1.$ We use the notation $\A_{nf}=\cup^\infty_{M=1}\A^M_{nf}$ and $\C_{nf}=\cup^\infty_{M=1}\C^M_{nf}.$
\begin{definition}\label{def a.1}
We define $\B_{nf}$ as the set of all pairs $Z_T=(Z^0_T, Z^1_T) \in L^1(\mathbb{R}^2_+)$ such that $\mathbb{E}[Z^0_T]=1$ and such that $\B_{nf}$ is polar to $\A_{nf}$, i.e.~
\begin{equation}\label{D2a}
\mathbb{E}[\varphi^0_T Z^0_T + \varphi^1_T Z^1_T] \leq 0, 
\end{equation}
for all $\varphi_T=(\varphi^0_T, \varphi^1_T) \in \A_{nf}.$

We associate to $Z_T \in \B_{nf}$ the martingale $Z$ defined by
\begin{equation}\label{D2}
Z_t= \mathbb{E}[Z_T | \F_t], \qquad 0 \leq t \leq T.
\end{equation}
\end{definition}

In \eqref{D2a} we define the expectation by requiring that the negative part of $(\varphi^0_T Z^0_T + \varphi^1_T Z^1_T)$ has to be integrable. Then \eqref{D2a} well-defines a number in $]-\infty,+\infty].$

We shall identify the elements $(Z^0_T, Z^1_T) \in \B_{nf}$ with pairs $(\widetilde{S}, Q)$ by letting 
\begin{equation}\label{p11}
\widetilde{S}_t = \frac{Z^1_t}{Z^0_t}, \quad \mbox{and} \quad \frac{dQ}{d\mathbb{P}}=Z^0_T.
\end{equation}
The random variable $Z^0_T$ may vanish on a set of positive measure. This corresponds to the fact that the probability measure $Q$ only is absolutely continuous w.r.~to $\mathbb{P}$ and not necessarily equivalent. In this case we define $\widetilde{S}_t=S_t$ where $Z^0_t$ vanishes.

We now show that $\B_{nf}$ equals precisely the set of consistent price systems $(\widetilde{S},Q)$ (in the non-local sense) where we allow $Q$ to be only absolutely continuous to $\mathbb{P}$ (in Definition \ref{1.1} we have required that $Q$ is equivalent to $\mathbb{P}$).
\begin{proposition}\label{prop4.2}
In the setting of Definition \ref{def a.1} let $Z_T \in \B_{nf}.$ Then the martingale $Z=(Z_t)_{0 \leq t \leq T}$ in \eqref{D2} satisfies
\begin{equation}\label{D3}
\widetilde{S}_t:=\frac{Z^1_t}{Z^0_t} \in [(1-\lambda)S_t, S_t], \quad 0 \leq t \leq T, \quad \mbox{a.s.} 
\end{equation}

Conversely, suppose that $Z=(Z^0_t, Z^1_t)_{0 \leq t \leq T}$ is an $\mathbb{R}^2_+$-valued $\mathbb{P}$-martingale such that $\nobreak{Z^0_0=1}$ and $\widetilde{S}_t:=\frac{Z^1_t}{Z^0_t}$ takes a.s.~on $\{Z^0_t > 0\}$ its values in $[(1-\lambda)S_t, S_t].$ Then $Z_T=(Z^0_T, Z^1_T) \in \B_{nf}.$
\end{proposition}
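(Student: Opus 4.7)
The two implications are dual to each other and I would establish them by the standard recipe from consistent-price-system duality: for the forward direction by testing polarity against elementary buy/sell strategies, and for the converse by an integration-by-parts argument yielding a supermartingale.

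For the forward direction, fix a $[0,T]$-valued stopping time $\tau$ and a bounded $\F_\tau$-measurable $f_\tau \geq 0$ such that $f_\tau S_\tau$ is bounded. The ask process $a_t = (-S_\tau, 1) f_\tau \mathbbm{1}_{\rrbracket \tau, T \rrbracket}(t)$ from Definition \ref{def4.7} is self-financing and num\'eraire-free $M$-admissible (with $M = \|f_\tau S_\tau\|_\infty$, as noted after that definition), so its terminal value $(-f_\tau S_\tau, f_\tau)$ lies in $\A_{nf}$. Polarity \eqref{D2a} then reads $\E[f_\tau(Z^1_T - S_\tau Z^0_T)] \leq 0$, which by the $\p$-martingale property of $Z$ and the $\F_\tau$-measurability of $f_\tau$ and $S_\tau$ reduces to $\E[f_\tau(Z^1_\tau - S_\tau Z^0_\tau)] \leq 0$. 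Taking $f_\tau = \mathbbm{1}_A \mathbbm{1}_{\{S_\tau \leq N\}}$ for $A \in \F_\tau$ and letting $N \to \infty$ yields $Z^1_\tau \leq S_\tau Z^0_\tau$ a.s. The analogous test with the bid process $b_t = ((1-\la) S_\tau, -1) g_\tau \mathbbm{1}_{\rrbracket \tau, T \rrbracket}(t)$ (admissible when $g_\tau$ is bounded) gives $Z^1_\tau \geq (1-\la) S_\tau Z^0_\tau$ a.s. Specialising to deterministic times and invoking right-continuity of $S$ and $Z$ delivers \eqref{D3}.

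For the converse direction, fix $\vp \in \A^M_{nf}$ and set $U_t := \vp^0_t Z^0_t + \vp^1_t Z^1_t$. Integration by parts against the c\`adl\`ag $\p$-martingales $Z^0, Z^1$ expresses $U$ as the sum of the stochastic integrals $\int \vp^0_{s-}\, dZ^0_s + \int \vp^1_{s-}\, dZ^1_s$ (local $\p$-martingales) and a finite-variation drift $\int Z^0_{s-}\, d\vp^0_s + \int Z^1_{s-}\, d\vp^1_s$; the self-financing condition \eqref{sfc} together with the bid-ask sandwich $(1-\la) S Z^0 \leq Z^1 \leq S Z^0$ bounds this drift above by
\begin{equation*}
\int (Z^1_{s-} - S_{s-} Z^0_{s-})\, d\vp^{1,\uparrow}_s + \int ((1-\la) S_{s-} Z^0_{s-} - Z^1_{s-})\, d\vp^{1,\downarrow}_s \leq 0.
\end{equation*}
This is precisely the content of Proposition~2.3 of \cite{S13} (already invoked in the proof of Lemma \ref{l4.9}), which I would cite as a black box to conclude that $U$ is an optional strong $\p$-supermartingale. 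The admissibility bound $V_t(\vp) \geq -M(1+S_t)$ combined with $(1-\la) S_t Z^0_t \leq Z^1_t$ yields $U_t \geq -M Z^0_t - \tfrac{M}{1-\la} Z^1_t$, and the right-hand side is a uniformly integrable $\p$-martingale; hence $U$ is a genuine $\p$-supermartingale, and $U_0 = 0$ forces $\E[U_T] \leq 0$, which is \eqref{D2a}.

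The main technical obstacle is the c\`adl\`ag jump bookkeeping delegated to Proposition~2.3 of \cite{S13}: the three self-financing inequalities \eqref{sf2.1}--\eqref{sf2.3} must be matched carefully against the continuous, left-, and right-jump parts of $\vp$ in the integration-by-parts formula. A secondary subtlety is the behaviour on the set $\{Z^0 = 0\}$, where $\widetilde{S}$ need not be defined; a stopping-time truncation at the hitting time of $Z^0$, together with the same lower bound on $U$, handles this and simultaneously certifies that the negative part of $\vp^0_T Z^0_T + \vp^1_T Z^1_T$ is $\p$-integrable, so that the expectation in \eqref{D2a} is well-defined in $]-\infty, +\infty]$ as stipulated in Definition~\ref{def a.1}.
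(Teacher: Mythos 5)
Your proposal is correct and follows the same essential route as the paper: in the forward direction you test polarity against the elementary ask/bid processes of Definition~\ref{def4.7}, and in the converse direction you invoke Proposition~2.3 of \cite{S13} to obtain the supermartingale property, closing with the Fatou/uniform-integrability argument. The only cosmetic differences are that you argue directly (taking $f_\tau=\mathbbm{1}_A\mathbbm{1}_{\{S_\tau\le N\}}$ and letting $N\to\infty$) whereas the paper argues by contradiction with the normalised $f_\tau=\mathbbm{1}_{\{\widetilde S_\tau>S_\tau\}}/S_\tau$, and that you keep the converse entirely under $\p$ with $U_t=\vp^0_tZ^0_t+\vp^1_tZ^1_t$ rather than changing to $Q$ and working with $\widetilde V_t=\vp^0_t+\vp^1_t\widetilde S_t$; one small omission is the endpoint $t=T$, where the strategies $\mathbbm{1}_{\rrbracket\tau,T\rrbracket}$ carry no information, and you would either need the predictable form \eqref{B1}/\eqref{B1a} (as the paper uses) or the standing assumption $S_T=S_{T-}$, $\F_T=\F_{T-}$ to pass from $t<T$ to $t=T$ by left-continuity.
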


\begin{proof}
To show \eqref{D3} suppose that there is a $[0,T[$-valued stopping time $\tau$ such that $Q[\widetilde{S}_\tau > S_\tau] >0.$
Consider as in \eqref{155}
$$a_t = (-1, \frac{1}{S_\tau}) \mathbbm{1}_{\{\widetilde{S}_\tau > S_\tau\}}\mathbbm{1}_{\rrbracket \tau,T\rrbracket}(t), \quad 0 \leq t \leq T.$$
This is a self-financing strategy which is admissible in the numéraire-free sense (in fact, also in the numéraire-based sense) for which \eqref{D2a} yields.
\begin{align*}
\mathbb{E}_{\mathbb{P}}[(-Z^0_T + \frac{Z^1_T}{S_\tau})\mathbbm{1}_{\{\widetilde{S}_\tau > S_\tau \}}] &= \mathbb{E}_{\mathbb{P}}[\mathbb{E}_{\mathbb{P}}[(-Z^0_T + \frac{Z^1_T}{S_\tau}) \mathbbm{1}_{\{\widetilde{S}_\tau > S_\tau \}}|\F_\tau ]]\\
&=\mathbb{E}_{\mathbb{P}}[Z^0_\tau (-1 + \frac{\widetilde{S}_\tau}{S_\tau} ) \mathbbm{1}_{\{\widetilde{S}_\tau > S_\tau \}} ]\\
&=\mathbb{E}_Q [(-1 + \frac{\widetilde{S}_\tau}{S_\tau}) \mathbbm{1}_{ \{\widetilde{S}_\tau > S_\tau \}}] >0,
\end{align*}
a contradiction. In the remaining case that $Q[\widetilde{S}_T > S_T] > 0$ we consider the strategy $\nobreak{a_t=(-1,\frac{1}{S_T}) \mathbbm{1}_{\{\widetilde{S}_T >S_T\}} \mathbbm{1}_{\llbracket T \rrbracket}(t)}$ as in \eqref{B1}.

We still have to show that the case, $Q[\widetilde{S}_\tau < (1-\lambda) S_\tau] >0$, for some  stopping time $0 \leq \tau < T,$ leads to a contradiction too. As in \eqref{156} define
$$b_t=((1-\lambda) S_\tau, -1)\mathbbm{1}_{\{\widetilde{S}_\tau <(1-\lambda)S_\tau\}} \mathbbm{1}_{\rrbracket \tau,T \rrbracket}(t), \quad 0 \leq t \leq T.$$
Again this strategy is self-financing and admissible (this time only in the numéraire-free sense) and we arrive at a contradiction 
\begin{align*}
\mathbb{E}_{\mathbb{P}}[((1-\lambda)S_\tau Z^0_T - Z^1_T)\mathbbm{1}_{\{\widetilde{S}_\tau < (1-\lambda)S_\tau \}}] = \mathbb{E}_Q [((1-\lambda)S_\tau - \widetilde{S}_\tau ) \mathbbm{1}_{\{\widetilde{S}_\tau < (1-\lambda)S_\tau \}}] >0.
\end{align*}
The case $Q[\widetilde{S}_T < (1-\lambda)S_T]$ is dealt by considering \eqref{B1a} similarly as above.
This shows the first part of the proposition.

As regards the second part, fix a martingale $Z=(Z^0_t, Z^1_t)_{0 \leq t \leq T}$ with the properties stated there and let $(\widetilde{S},Q)$ be defined by \eqref{p11}. For every self-financing trading strategy $\varphi=(\varphi^0_t, \varphi^1_t)_{0 \leq t \leq T}$, starting at $(\varphi^0_0, \varphi^1_0)=(0,0)$ and being $M$-admissible in the numéraire-free sense we deduce from Proposition 2.3 and the subsequent remark in \cite{S13} that
$\widetilde{V}_t:=\varphi^0_t + \varphi^1_t \widetilde{S}_t$ is an optional strong super-martingale under $Q$ (see \cite{S13}, Def.~1.5, for a definition). This gives the desired inequality
$$0 = \widetilde{V}_0 \geq \mathbb{E}_Q[\widetilde{V}_T]= \mathbb{E}_{\mathbb{P}}[ \varphi^0_T Z^0_T + \varphi^1_T Z^1_T].$$
The proof of Proposition \ref{prop4.2} is now complete.
\end{proof}

In order to obtain a proof of Th. \ref{t1.5} we still need a version of the bipolar theorem for $L^0.$ We first recall the bipolar theorem in the one-dimensional setting as obtained in \cite{BS99}. For a subset $A \subseteq L^0(\mathbb{R}_+)$ we define its polar in $L^1(\mathbb{R}_+)$ by
$$A^0=\{g \in L^1(\mathbb{R}_+): \mathbb{E}[fg]\leq 1\}.$$
The bipolar theorem in \cite{BS99} states that $f \in L^0(\mathbb{R}_+)$ belongs to the closed (w.r. to convergence in measure), convex, solid hull of $A$ if and only if
$$\mathbb{E}[fg]\leq 1, \quad \mbox{for all} \quad g \in A^0.$$

We need the multi-dimensional version of this result established in (\cite{KS09}, Th.~5.5.3) which applies to the cone $\A_{nf}$ in $L^0(\mathbb{R}^2).$

While in the one-dimensional setting considered in \cite{BS99} there is just one natural order structure of $L^0(\mathbb{R})$, in the two-dimensional setting the situation is more complicated (see \cite{BM03}). We define a partial order on $L^0(\mathbb{R}^2)$ by letting $\varphi_T=(\varphi^0_T, \varphi^1_T) \succeq \psi_T=(\psi^0_T, \psi^1_T)$ if the difference $\varphi_T - \psi_T$ may be liquidated to the zero-portfolio, i.e. $V_T(\varphi_T - \psi_T) \geq 0.$ This partial order is designed in such a way that, for $\varphi_T \in \A^M_{nf}$, we have that $\varphi_T  \succeq(-M, -M).$

Following \cite{KS09} we say that a sequence $(\varphi^n_T)^\infty_{n=1}$ in $L^0(\mathbb{R}^2)$ {\it Fatou-converges} to $\varphi_T \in L^0(\mathbb{R}^2)$ if there is $M>0$ such that each $\varphi^n_T$ dominates $(-M,-M)$ and $(\varphi^n_T)^\infty_{n=1}$ converges a.s.~to $\varphi_T.$

By (a version of) Fatou's lemma this convergence implies that, for each $Z_T=(Z^0_T, Z^1_T) \in \B_{nf}$, 
$$\liminf_{n \to \infty} \langle \varphi^n_T, Z_T \rangle:= \liminf_{n \to \infty} \mathbb{E}[\varphi^{0,n}_T Z^0_T + \varphi^{1,n}_T Z^1_T] \geq \mathbb{E}[\varphi^0_T Z^0_T + \varphi^1_T Z^1_T] = \langle \varphi_T, Z_T\rangle,$$
as $\varphi^{0,n}_T Z^0_T + \varphi^{1,n}_T Z^1_t \geq -M(Z^0_T + Z^1_T)$ and the latter function is $\mathbb{P}$-integrable.
\vskip10pt
Denote by $\A^b_{nf}$ the set of bounded elements in $\A_{nf}$, i.e.~$\A^b_{nf}=\A_{nf} \cap L^\infty(\mathbb{R}^2).$ It is straightforward to deduce from Theorem \ref{t4.14.1} that under the assumption of Theorem \ref{t1.5} the following properties are satisfied.

\begin{itemize}
\item [$(i)$] $\A_{nf}$ is Fatou-closed, i.e.~contains all limits of its Fatou-convergent sequences.
\item [$(ii)$] $\A^b_{nf}$ is Fatou-dense in $\A_{nf}$, i.e.~for $\varphi_T \in \A_{nf}$, there is a sequence $(\varphi^n_T)^\infty_{n=1} \in \A^b_{nf}$ which Fatou-converges to $\varphi_T.$
\item [$(iii)$] $\A^b_{nf}$ contains the negative orthant $-L^\infty(\mathbb{R}^2_+).$
\end{itemize}

Define the polar of $\A_{nf}$ by
$$\A^0_{nf}=\{Z_T=(Z^0_T, Z^1_T) \in L^1(\mathbb{R}^2):\langle\varphi_T, Z_T\rangle \leq 1\}.$$

As $\A_{nf}$ is a cone we may equivalently write
$$\A^0_{nf}=\{(Z_T=(Z^0_T, Z^1_T) \in L^1(\mathbb{R}^2):\langle\varphi_T, Z_T\rangle \leq 0\}.$$

Proposition \ref{prop4.2} states that $\A^0_{nf}$ equals the cone generated by $\B_{nf}.$

It is shown in (\cite{KS09}, Th.~5.5.3) that the three properties above imply that, for the set $\A_{nf}$ in $L^0(\mathbb{R}^2)$ which satisfies $(i)$, $(ii)$ and $(iii)$, the bipolar theorem holds true, i.e.~an element $X_T=(X^0_T, X^1_T)\in L^0(\mathbb{R}^2)$ such that $X_T \succeq (-M, -M), \ \mbox{for some} \ M>0,$ is in $\A_{nf}$ if and only if, 
\begin{equation}\label{P3}
\langle X_T, Z_T \rangle:= \mathbb{E}[X^0_T Z^0_T + X^1_T Z^1_T] \leq 0, \quad \mbox{for every} \quad Z_T \in \A^0_{nf},
\end{equation}

By normalising, it is equivalent to require the validity of \eqref{P3} for all $Z_T \in \B_{nf}.$
\vskip10pt
We thus have assembled all the ingredients for a proof of the numéraire-free version of the super-hedging theorem.
\vskip10pt
{\bf Proof of Theorem \ref{t1.5}:}
The above discussion actually yields the following two-dimensional result which is more general than the one-dimensional statement of Theorem \ref{t1.5}. Under the hypotheses of Theorem \ref{t1.5} consider a contingent claim $X_T=(X^0_T, X^1_T)$ which delivers $X^0_T$ many bonds and $X^1_T$ many stocks at time $T$.
Then there is a self-financing, admissible (in the numéraire-free sense) strategy $\varphi$, starting with $(\varphi^0_0, \varphi^1_0)=(0,0)$ and ending with $(\varphi^0_T, \varphi^1_T)=(X^0_T, X^1_T)$ if and only if

\begin{equation}\label{P4}
\langle X_T, Z_T \rangle= \mathbb{E}_{\mathbb{P}}[X^0_T Z^0_T + X^1_T Z^1_T] = \mathbb{E}_Q[X^0_T + X^1_T \widetilde{S}_t] \leq 0,
\end{equation}
for every $Z_T \in \B_{nf}.$ This is just statement \eqref{P3}, where $(\widetilde{S},Q)$ is given by \eqref{p11}, i.e.~$Q$ is a probability measure, absolutely continuous w.r.~to $\mathbb{P}$, and $\widetilde{S}$ is a (true) $Q$-martingale taking values in $[(1-\lambda) S,S].$

We still need two observations. In \eqref{P4} we may equivalently assume that the probability measure $Q$ is actually {\it equivalent} to $\mathbb{P}$, i.e.~the corresponding martingale $Z$ satisfies $Z^0_T > 0$ almost surely. Indeed, fix $Z_T \in \B_{nf}$ as in \eqref{P4}. By assumption $(CPS^\lambda)$ (in the non-local sense) there is {\it some} $\overline{Z}_T \in \B_{nf}$ verifying $\overline{Z}^0_T > 0$ almost surely. Note that $\langle X_T, \overline{Z}_T \rangle$ takes a finite value. For $0 < \mu < 1$ the convex combination $\mu \overline{Z}_T + (1-\mu) Z_T$ is in $\B_{nf}$ and still satisfies the strict positivity condition. Sending $\mu$ to zero we see that in \eqref{P4} we may assume w.l.g.~that $Z^0_T$ is a.s.~strictly positive.

A second remark pertains to the initial endowment $(\varphi^0_0, \varphi^1_0)$ which in \eqref{P4} we have normalised to $(0,0).$ If we replace $(0,0)$ by an arbitrary pair $(X^0_0, X^1_0) \in \mathbb{R}^2$ then \eqref{P4} trivially translates to the equivalence of the following two statements for a contingent claim $(X^0_T, X^1_T)$ verifying 
$$V_T(X^0_T, X^1_T) \geq -M(1+S_T), \quad \mbox{for some} \quad M>0.$$

\begin{itemize}
\item [$(i)$] There is a self-financing, admissible (in the numéraire-free sense) trading strategy $\varphi=(\varphi^0_t, \varphi^1_t)_{0 \leq t \leq T}$ such that 
$$\varphi_0=(X^0_0, X^1_0) \quad \mbox{and} \quad \varphi_T= (X^0_T, X^1_T)$$
\item [$(ii)$] For every consistent price system, i.e.~each probability measure $Q$, equivalent to $\mathbb{P}$ such that there is a martingale $\widetilde{S}$ under $Q$, taking its values in the bid-ask spread $[(1-\lambda) S,S],$ we have
$$\mathbb{E}_Q[(X^0_T-X^0_0)+(X^1_T - X^1_0)\widetilde{S}_T] \leq 0.$$
\end{itemize}
Specialising to the case where $X^0_T$ and $X^1_T$ is equal to zero we obtain the assertion of  Theorem \ref{t1.5}.
\hfill\vrule height6pt width6pt depth0pt \medskip

\section{The proof of Theorem \ref{t1.4}}
We now deduce the numéraire-based super-replication theorem from its numéraire-free counterpart.

$(i) \Rightarrow (ii)$ This is the easy implication. Suppose that $X_T$ and $\varphi=(\varphi^0_t, \varphi^1_t)_{0 \leq t \leq T}$ are given as in $(i)$ of Theorem \ref{t1.4}. Let $(\widetilde{S}, Q)$ be a consistent local price system. By Proposition 1.6 in \cite{S13}, the process $\widetilde{V}_t = (\varphi^0_t + \varphi^1_t \widetilde{S}_t)_{0 \leq t \leq T}$ is an optional strong super-martingale under $Q$ which implies \eqref{zp4}.

$(ii) \Rightarrow (i)$ Conversely, let $X_T \geq-M$ be as in the statement of Theorem \ref{t1.4} and suppose that $(ii)$ holds true. Define the $[0,T] \cup \{\infty\}$-valued stopping time $\tau_n$ by 
$$\tau_n = \inf \{ t : S_t \geq n\}.$$
Also define 
\begin{equation*}
X^n_T=
\begin{cases}
X_T, \quad &\mbox{on} \quad \{\tau_n = \infty \},\\
-M,\quad &\mbox{on} \quad \{\tau_n \leq T\},
\end{cases}
\end{equation*}
so that $(X^n_T)^\infty_{n=1}$ is $\F_{\tau_n}$-measurable and increases a.s.~to $X_T$.

Let $0 < \lambda_n < \lambda$ be a sequence of reals increasing to $\lambda.$

For fixed $n \in \mathbb{N}$ we may apply Theorem \ref{t1.5} to the stopped process $S^{\tau_n}$, the random variable $X^n_T$ and transaction costs $\lambda_n$. To verify that the conditions of Theorem \ref{t1.5} are indeed satisfied note that under the hypotheses of Theorem \ref{t1.4}, for every $0 < \lambda' < 1$, condition $(CPS^{\lambda'})$ is satisfied for $S$ in a local sense and therefore -- by stopping -- also for $S^{\tau_n}$. By Proposition \ref{prop6.1} below we conclude that $(CPS^{\lambda'})$ is, in fact, satisfied in a non-local sense for the process $S^{\tau_n}$ as required by Theorem \ref{t1.5}.

Next we show that condition $(ii)$ of Theorem \ref{t1.5} is satisfied for the process $S^{\tau_n}$ and transaction costs $\lambda_n$. Indeed, fix $n$ and let $Q\sim \mathbb{P}$ be such that there is a $Q$-martingale $\widetilde{S}=(\widetilde{S}_t)_{0 \leq t \leq T}$ taking its values in $[(1-\lambda_n) S^{\tau_n}, S^{\tau_n}]$ and associate the martingales $Z^0, Z^1$ to $(Q, \widetilde{S})$.

We may concatenate this $\lambda_n$-consistent price systems for $S^{\tau_n}$ to a $\lambda$-consistent local price system $\overline{Z}=(\overline{Z}^0_t, \overline{Z}^1_t)_{0 \leq t \leq T}$ for the process $S$.

Here are the details. Fix $0 < \lambda' < \frac{\lambda - \lambda_n}{2}.$ By the assumption of Theorem \ref{t1.4} there is a $\lambda'$-consistent local price system $\check{Z}=(\check{Z}^0_t, \check{Z}^1_t)_{0 \leq t \leq T}$ for $S$. Define $\overline{Z}$ by
\begin{align*}
\overline{Z}^0_t&=
\begin{cases}
Z^0_t, \hspace{14mm} &0  \leq t \leq \tau_n\\
\check{Z}^0_t \frac{Z^0_\tau}{\check{Z}^0_\tau}, \hspace{14mm} &\tau_n \leq \tau \leq T,\\
\end{cases}\\
\overline{Z}^1_t&=
\begin{cases}
(1-\lambda')Z^1_t, &0 \leq t \leq \tau_n\\
(1-\lambda')\check{Z}^1_t \frac{Z^1_\tau}{\check{Z}^1_\tau}, &\tau_n \leq \tau \leq T.
\end{cases}
\end{align*}

Clearly, $\overline{Z}^0$ (resp.$\overline{Z}^1$) is an $\mathbb{R}_+$-valued martingale (resp.~local martingale) under $\mathbb{P}$ and $\frac{d\overline{Q}}{d\mathbb{P}}=\overline{Z}^0_T$ defines a probability measure on $\F$ equivalent to $\mathbb{P}$. To show that $\frac{\overline{Z}^1}{\overline{Z}^0}$ takes its values in $[(1-\lambda)S,S]$ note that, for $0 \leq t \leq \tau_n$, the quotient $\frac{\overline{Z}^1_t}{\overline{Z}^0_t}$ lies in $[(1-\lambda_n)(1-\lambda')S_t, (1-\lambda')S_t].$ For $\tau_n \leq t \leq T$ we still obtain that $\frac{\overline{Z}^1_t}{\overline{Z}^0_t}$ lies in $[(1-\lambda_n)(1-\lambda')^2 S_t, \frac{1-\lambda'}{1-\lambda'}S_t]$ which is contained in $[(1-\lambda)S_t, S_t]$ as $\lambda' < \frac{\lambda-\lambda_n}{2}.$ By assumption $(ii)$ of Theorem \ref{t1.4} we conclude that
\begin{align*}
\mathbb{E}_Q[X^n_T] &= \mathbb{E}_{\overline{Q}}[X^n_T]\\
& \leq \mathbb{E}_{\overline{Q}}[X_T] \leq X_0.
\end{align*}
Hence we may apply Theorem \ref{t1.5} to conclude that there is a $\lambda_n$-self-financing trading strategy $\varphi^n=(\varphi^{0,n}_t, \varphi^{1,n}_t)_{0 \leq t \leq T}$ for $S$ such that 
$\varphi^n_0= (X_0,0)$ and $\varphi^n_T=\varphi^n_{\tau_n}=(X^n_T,0)$ and which is $M$-admissible in the sense of \eqref{D4}. Applying Theorem 2.5 in \cite{S13} to the case $x=M$ and $y=0$ we may conclude that each $\varphi^n$ is, in fact, $M$-admissible in the sense of \eqref{D3.1}. Finally, we apply Theorem \ref{t4.14} and the subsequent Remark \ref{r3.5}, which yields the desired self-financing trading strategy $\varphi$ as a limit of $(\varphi^n)^\infty_{n=1}$. This strategy $\varphi$ has the properties stated in Theorem \ref{t1.4} $(i).$
\hfill\vrule height6pt width6pt depth0pt \medskip

\section{Appendix}
The following proposition seems to be a well-known folklore type result. As we are unable to give a reference we provide a proof.

\begin{proposition}\label{prop6.1}
Let $(X_t)_{0 \leq t \leq T}$ be an $\mathbb{R}_+$-valued local martingale, $\tau$ a stopping time, and $C>0$ a constant such that $X_t \leq C$, for $0 \leq t < \tau.$ Then the stopped process $X^\tau$ is a martingale.
\end{proposition}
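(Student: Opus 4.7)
The plan is to exploit the two standard facts that a non-negative local martingale is automatically a supermartingale, and that for a localizing sequence $(\sigma_n)$ of $X$, the stopped process $X^{\tau\wedge\sigma_n}$ is a true (uniformly integrable) martingale. Since the supermartingale property for $X^\tau$ is essentially free once we know $X_t \leq C$ on $\{t<\tau\}$, the whole game is to upgrade this to a genuine martingale, and for that it suffices to show $\mathbb{E}[X^\tau_t]=\mathbb{E}[X_0]$ for each fixed $t$.

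First I would fix $t$ and a localizing sequence $\sigma_n \nearrow \infty$ making $X^{\sigma_n}$ a uniformly integrable martingale, so that $\mathbb{E}[X_{t\wedge\tau\wedge\sigma_n}]=\mathbb{E}[X_0]$ for every $n$. Then I would split this expectation according to the events $\{\sigma_n<\tau\wedge t\}$ and $\{\sigma_n\geq \tau\wedge t\}$:
\begin{equation*}
\mathbb{E}[X_0] = \mathbb{E}\bigl[X_{\sigma_n}\mathbbm{1}_{\{\sigma_n<\tau\wedge t\}}\bigr] + \mathbb{E}\bigl[X_{t\wedge\tau}\mathbbm{1}_{\{\sigma_n\geq \tau\wedge t\}}\bigr].
\end{equation*}
On the first set $\sigma_n<\tau$, so the hypothesis $X_s\leq C$ for $s<\tau$ gives $X_{\sigma_n}\leq C$, and this contribution is bounded by $C\cdot \mathbb{P}[\sigma_n<t]\to 0$. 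On the second set the integrand is the non-negative random variable $X_{t\wedge\tau}$, and the indicators increase to $1$ (since $\tau\wedge t\leq t<\infty$ and $\sigma_n\to\infty$), so monotone convergence identifies the limit as $\mathbb{E}[X^\tau_t]$, in particular showing $X^\tau_t$ is integrable with $\mathbb{E}[X^\tau_t]=\mathbb{E}[X_0]$.

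To conclude that $X^\tau$ is a martingale rather than just a process with constant expectation, I would combine this with the supermartingale property obtained from Fatou's lemma for conditional expectations: passing $n\to\infty$ in $\mathbb{E}[X_{t\wedge\tau\wedge\sigma_n}\mid\mathcal{F}_s]=X_{s\wedge\tau\wedge\sigma_n}$ gives $\mathbb{E}[X^\tau_t\mid\mathcal{F}_s]\leq X^\tau_s$; equality of expectations then forces a.s.\ equality of conditional expectations.

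The main subtlety I would expect is that the hypothesis only controls $X$ strictly before $\tau$, so the jump $\Delta X_\tau$ and the value $X_\tau$ itself could be very large; one cannot simply dominate and use dominated convergence. The decomposition above sidesteps this, because on $\{\sigma_n\geq \tau\wedge t\}$ we already see $X_{t\wedge\tau}$ itself (no approximation in the argument of $X$), so monotone rather than dominated convergence applies, and on the complementary set we are evaluating $X$ strictly before $\tau$, where the bound $C$ is available.
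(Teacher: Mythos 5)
Your proposal is correct and is essentially the paper's own argument: both decompose the localized expectation according to whether the localizing time falls before or after the cut-off, use the bound $C$ on the ``before'' piece (which vanishes because $\p[\sigma_n < t]\to 0$) and monotone convergence on the ``after'' piece, and then upgrade constant expectation to the martingale property via the supermartingale property of nonnegative local martingales. The only cosmetic difference is that you establish $\E[X_{t\wedge\tau}]=X_0$ for each fixed $t$, whereas the paper checks it only at the terminal time and lets the supermartingale inequality propagate the equality to all intermediate times.
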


\begin{proof}
It follows from Fatou's lemma and the boundedness from below that $X$ is a super-martingale. Hence it will suffice to show that 
\begin{equation}\label{A1}
\mathbb{E}[X_\tau]=X_0.
\end{equation}
By hypothesis there is a sequence $(\sigma_k)^\infty_{k=1}$ of $[0,T] \cup \{\infty\}$-valued stopping times, increasing to $\infty$, such that 
$$\mathbb{E}[X_{\sigma_k \wedge \tau}]=X_0, \quad \mbox{for} \quad k \geq 1.$$
As $\lim_{k \to \infty} \mathbb{P} [\sigma_k < \tau] = 0$ and $X_{\sigma_k}$ is bounded by $C$ on $\{\sigma_k < \tau\}$ we obtain from the monotone convergence theorem:
\begin{align*}
X_0&=\lim_{k \to \infty} \mathbb{E}[X_\tau \mathbbm{1}_{\{\sigma_k \geq \tau \}} + X_{\sigma_k} \mathbbm{1}_{\{\sigma_k < \tau \}}]\\
&= \mathbb{E}[X_\tau].
\end{align*}
This gives \eqref{A1}.
\end{proof}

\vskip20pt
\begin{acka}
We thank Irene Klein for her insistency on the topic as well as fruitful discussions on the proof of  Theorem \ref{t1.4}, and Christoph Czichowsky for his advise and careful reading of the paper.
\end{acka}

\bibliographystyle{plain}

\end{document}